\newtheorem{defin}{Definition}[section]
\newtheorem{theorem}[defin]{Theorem}
\newcommand{\R}{\mathbb{R}}
\newcommand{\Z}{\mathbb{Z}}
\DeclareMathOperator{\vol}{vol}
\DeclareMathOperator{\Aut}{Aut}
\newcommand{\dates}[2]{$\mathnormal{#1}$--$\mathnormal{#2}$}
\begin{document}

\title[A Breakthrough in Sphere Packing: The Search for Magic Functions]{A Breakthrough in Sphere Packing:\\ The Search for Magic Functions}

\author{David de Laat}
\address{D.~de Laat, Centrum Wiskunde \& Informatica (CWI), Science Park 123, 1098 XG Amsterdam, The Netherlands} 
\email{mail@daviddelaat.nl}

\author{Frank Vallentin} 
\address{F.~Vallentin, Mathematisches Institut, Universit\"at zu K\"oln, Weyertal 86--90, 50931 K\"oln, Germany}
\email{frank.vallentin@uni-koeln.de}

\date{8 July 2016}

\begin{abstract}
  This paper is an exposition, written for the Nieuw Archief voor
  Wiskunde,  about the two recent breakthrough results in the theory of sphere
  packings. It includes an interview with Henry Cohn, Abhinav Kumar,
  Stephen D. Miller, and Maryna Viazovska.
\end{abstract}

\maketitle

\section{Introduction}

The sphere packing problem asks for a densest packing of congruent
solid spheres in $n$-dimensional space $\R^n$. In a packing the
(solid) spheres are allowed to touch on their boundaries, but their interiors
should not intersect.

While the case of the real line, $n = 1$, is trivial, the case $n = 2$
of packing circles in the plane was first solved in 1892 by the
Norwegian mathematician Thue (\dates{1863}{1922}). He showed that the
honeycomb hexagonal lattice gives an optimal packing; see
Figure~\ref{fig:honeycomb}.

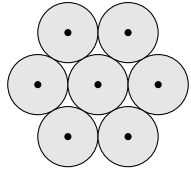
\begin{figure}[htb]
\begin{tikzpicture}[scale=2]
\foreach \x/\y in {0/0, 0.4/0, 0.8/0, 0.2/0.346, 0.6/0.346, 0.2/-0.346, 0.6/-0.346}
{
\filldraw[fill=black!10] (\x,\y) circle (0.2);
\filldraw[fill] (\x,\y) circle (0.02);
}
\end{tikzpicture}
\caption{The hexagonal lattice and the corresponding circle packing.}
\label{fig:honeycomb}
\end{figure}

The first rigorous proof is by the Hungarian mathematician Fejes
T\'oth (\dates{1915}{2005}) in 1940. He also proved that this packing
is unique (up to rotations, translations, and uniform scaling) among
periodic packings. For $n = 3$, the sphere packing problem is known as
the Kepler conjecture. It was solved by the American mathematician
Hales in 1998 following an approach by Fejes T\'oth. Hales' proof is
extremely complex, takes more than 300 pages, and makes heavy use of
computers. One of the difficulties of the sphere packing problem in
three dimensions is that there are uncountably many inequivalent
optimal packings. In 2014 a fully computer verified version of Hales'
proof was completed; it was a result of the collaborative Flyspeck
project, also directed by Hales \cite{HalesEtc}.

Recently, Maryna Viazovska, a postdoctoral researcher from Ukraine
working at the Humboldt University of Berlin, solved the
eight-dimensional case. On 14 March 2016 she announced her spectacular
result in the paper titled ``The sphere packing problem in dimension
$8$'' \cite{Viazovska2016a} on the arXiv-preprint server. Only one
week later, on 21 March 2016, Henry Cohn, Abhinav Kumar, Stephen
D.~Miller, Danylo Radchenko, and Maryna Viazovska announced a proof
for the $n=24$ case \cite{Cohn2016b}, building on Viazovska's work.

Here we want to illustrate that the optimal sphere packings in
dimensions $8$ and $24$ are very special (in
Section~\ref{sec:construction} we give constructions of the
$\mathsf{E}_8$ lattice and of the Leech lattice $\Lambda_{24}$, which
provide the optimal sphere packings in their dimensions), and we aim
to explain the main ideas of the recent breakthrough results in sphere
packing:

\begin{theorem}
  The lattice $\mathsf{E}_8$ is the densest packing in
  $\mathbb{R}^8$.  The Leech lattice $\Lambda_{24}$ is the densest packing in
  $\mathbb{R}^{24}$. Moreover, no other periodic packing achieves the
  same density in the corresponding dimension.
\end{theorem}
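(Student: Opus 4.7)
The plan is to invoke the linear programming bound of Cohn and Elkies, which reduces the problem to producing a radial Schwartz function $f \colon \R^n \to \R$, a so-called magic function, satisfying the sign conditions $\hat f(y) \geq 0$ for all $y$ and $f(x) \leq 0$ for $|x| \geq r$, together with the normalization $f(0) = \hat f(0) > 0$. Applying Poisson summation to the periodicity lattice of any periodic packing and combining with these inequalities bounds the density by a quantity depending only on $r$ and on $f(0)/\hat f(0)$. To make the bound sharp at the density of $\mathsf E_8$ (with $r=\sqrt{2}$) and of $\Lambda_{24}$ (with $r=2$), the extremality conditions in Poisson summation force $f$ to vanish to the right order at every radius $\sqrt{2k}$ for $k\geq 1$ (respectively $k\geq 2$), and, since both lattices are self-dual, $\hat f$ must vanish at exactly the same radii.

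The main obstacle, and the content of Viazovska's breakthrough, is constructing such a doubly constrained function. The plan is to decompose $f = f_+ + f_-$, where $f_\pm$ is an eigenfunction of the Fourier transform with eigenvalue $\pm 1$, and to realize each $f_\pm$ as a Laplace-type integral in an upper half-plane variable $\tau$ of an explicit weakly holomorphic quasi-modular form for a congruence subgroup of $\mathrm{SL}_2(\Z)$, assembled from Eisenstein series and Jacobi theta functions. The modular transformation behavior under $\tau \mapsto -1/\tau$ produces the correct Fourier eigenvalue; the order of vanishing of the $q$-expansion at the cusp $i\infty$ engineers the prescribed zeros of $f$ at the lattice radii; and the required sign inequalities reduce to positivity statements for explicit integrals of modular forms, which can be checked rigorously from known $q$-expansion coefficients. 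In dimension $24$ the same template is followed with the weights adapted to $\Lambda_{24}$.

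For the uniqueness clause, one inspects when the Cohn-Elkies inequality is saturated by a periodic packing $P = \Lambda + \{x_1,\ldots,x_N\}$. Equality forces every non-zero inter-center distance in $P$ to lie in the zero set of $f$ and every non-zero vector of $\Lambda^\ast$ to lie in the zero set of $\hat f$, and the multiplicities in the corresponding theta series must match those of $\mathsf E_8$ (respectively $\Lambda_{24}$). Combined with the classical fact that $\mathsf E_8$ is the unique even unimodular lattice of rank $8$ and $\Lambda_{24}$ is the unique even unimodular lattice of rank $24$ without roots, this forces $P$ to be isometric to the claimed packing.
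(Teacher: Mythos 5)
Your sketch agrees with the paper's account in all essentials: the Cohn--Elkies linear programming bound reduces the theorem to producing a magic function, Poisson summation forces $f$ and $\widehat f$ to vanish with even order at the nonzero lattice radii, Viazovska writes $f$ as a combination of $\pm 1$ Fourier eigenfunctions realized as Laplace-type integrals of (quasi-)modular forms, the sign conditions are verified by rigorous computation on $q$-expansions, and uniqueness follows from the equality analysis together with the classification of even unimodular lattices in rank $8$ and $24$. So the overall route is the same.

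But there is one substantive gap in your account of where the zeros come from. You attribute the prescribed vanishing of $f$ at the radii $\sqrt{2k}$ to ``the order of vanishing of the $q$-expansion at the cusp $i\infty$.'' In the paper's account the mechanism is different and more explicit: Viazovska multiplies the Laplace transform by the prefactor $\sin(\pi r^2/2)^2$, which has double zeros at exactly the radii $r$ with $r^2 \in 2\Z$, and this is what creates the double roots of $f_\pm$ (and hence of $f$ and $\widehat f$) at all but the smallest occurring vector length. Cohn explicitly identifies this $\sin^2$ factor as \emph{the} missing ingredient that he and others had lacked in earlier attempts with Laplace transforms of modular forms. The cusp behavior of $\psi_\pm$ plays a complementary role: it determines the growth of the Laplace integral as $r^2$ approaches small even integers, i.e.\ the pole structure, which governs how many of the leading $\sin^2$ zeros are cancelled (so that the zero at $r_1$ is simple rather than double). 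Conflating these two effects obscures the central trick of the construction. Two smaller points: the paper writes $f = \alpha f_+ + \beta f_-$ with the scalars determined afterwards from $f(0) = \widehat f(0) = 1$, rather than a bare sum $f_+ + f_-$; and the uniqueness step as stated in the paper also needs the magic function to have \emph{no roots beyond} the prescribed ones, a hypothesis you do not record.
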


In Section~\ref{sec:breakthrough} we will see that the beautiful
proofs of these theorems use ideas from analytic number
theory. Viazovska found a ``magic'' function for dimension $8$, which
together with the linear programming bound of Cohn and Elkies, as
explained in Section~\ref{sec:lpbound}, gives a proof for the
optimality of the $\mathsf{E}_8$ lattice. Her method gave a hint how
to find a magic function for dimension $24$. Although the proof is
relatively easy to understand, and basically no computer assistance is
needed for its verification, computer assistance was crucial to
conjecture the existence of, and to find, these magic functions; see
Section~\ref{sec:evidence}.

\section{Optimal Lattices: Construction, Properties, Appearance}
\label{sec:construction}

In this section we introduce the two exceptional sphere packings in
dimension $8$ and $24$. The book ``Sphere Packings, Lattices, and
Groups'' of Conway and Sloane \cite{Conway1988a} is the definitive
reference on this topic; the Italian-American combinatorialist Rota
(\dates{1932}{1999}) reviewed the book saying:

\begin{quote}
  ``This is the best survey of the best work in one of the best fields
  of combinatorics, written by the best people. It will make the best
  reading by the best students interested in the best mathematics that
  is now going on.''
\end{quote}

\subsection{Lattice Packings}

How does one define a packing of unit spheres in $n$-dimensional
space? In general, such a packing is defined by the set of centers $L$
of the spheres in the packing. 

We talk about \textit{lattice packings} when $L$ forms a
\textit{lattice}. Then there are $n$ linearly independent vectors
$b_1, \ldots, b_n \in L$, called a \textit{lattice basis} of $L$, so
that $L$ is the set of integral linear combinations of
$b_1, \ldots, b_n$. For instance, the three lattices
\[
\mathbb{Z}, \quad 
\mathbb{Z} \begin{pmatrix} 2 \\ 0 \end{pmatrix} +
\mathbb{Z} \begin{pmatrix} -1 \\ \sqrt{3}\end{pmatrix}, \quad
\mathbb{Z} \begin{pmatrix} -\sqrt{2} \\ -\sqrt{2} \\ 0 \end{pmatrix} +
\mathbb{Z} \begin{pmatrix} \sqrt{2} \\ -\sqrt{2} \\ 0 \end{pmatrix} +
\mathbb{Z} \begin{pmatrix} 0 \\ \sqrt{2} \\ -\sqrt{2} \end{pmatrix}
\]
define densest sphere packings in dimensions $1$, $2$, and $3$.

We should also define what we mean when we talk about
\textit{density}. Intuitively, the density of a sphere packing is the
fraction of space covered by the spheres of the packing.  When the
sphere packing is a lattice, this intuition is easy to make precise:
The density of the sphere packing determined by $L$ is the volume of
one sphere divided by the volume of the lattice' fundamental domain. One
possible fundamental domain of $L$ is given by the parallelepiped
spanned by the lattice basis $b_1, \ldots, b_n$, so that we have
\[
\vol(\mathbb{R}^n / L) = |\det((b_1, \ldots, b_n))|.
\]
The density of $L$ is then given by
\[
\Delta(L) = \frac{\vol(B_n(r_1/2))}{\vol(\mathbb{R}^n / L)},
\]
where $r_1$ is the shortest nonzero vector length in $L$. Here,
$B_n(r_1/2)$ is the solid sphere of radius $r_1/2$, whose volume is
\[
\vol(B_n(r_1/2)) = (r_1/2)^n \frac{\pi^{n/2}}{\Gamma(n/2+1)},
\]
where $\Gamma$ is the gamma function; it satisfies the equation
$\Gamma(x+1) = x\Gamma(x)$, and two particularly useful values are
$\Gamma(1) = 1$ and $\Gamma(1/2) = \sqrt{\pi}$.

The optimal sphere packing density can be approached arbitrarily well
by the density of a periodic packing. In a \textit{periodic packing}
the set of centers is the union of a finite number $m$ of translates
of a lattice $L$
\[
\{x_1 + v : v \in L\} \cup \{x_2 + v : v \in L\} \cup \ldots \cup
\{x_m + v: v \in L\},
\]
see Figure~\ref{fig:periodic} for an example where $m = 3$.  The
density of a periodic packing is
$m \cdot \vol(B_n(r)) / \vol(\mathbb{R}^n / L)$, where $r$ is the
radius of the spheres in the packing. It is possible that in some
dimensions the optimal sphere packing is not a lattice packing; for
example, the best known sphere packing in $\R^{10}$ is a periodic
packing but it is not a lattice packing.

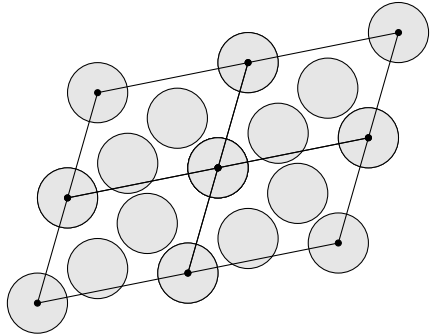
\begin{figure}[htb]

\begin{tikzpicture}[scale=2]

\foreach \x/\y in {0/0, 1/0.2, 0.2/0.7, 1.2/0.9}
{
\filldraw[xshift=\x cm,yshift=\y cm,fill=black!10] (0,0) circle (0.2);
\filldraw[xshift=\x cm,yshift=\y cm,fill=black!10] (1,0.2) circle (0.2);
\filldraw[xshift=\x cm,yshift=\y cm,fill=black!10] (0.2,0.7) circle (0.2);
\filldraw[xshift=\x cm,yshift=\y cm,fill=black!10] (1.2,0.9) circle (0.2);
\filldraw[xshift=\x cm,yshift=\y cm,fill=black!10] (0.4,0.23) circle (0.2);
\filldraw[xshift=\x cm,yshift=\y cm,fill=black!10] (0.73,0.53) circle (0.2);
}

\foreach \x/\y in {0/0, 1/0.2, 0.2/0.7, 1.2/0.9}
{
\draw[xshift=\x cm,yshift=\y cm] (0,0) -- (1,0.2);
\draw[xshift=\x cm,yshift=\y cm] (0,0) -- (0.2,0.7);
\draw[xshift=\x cm,yshift=\y cm] (0.2,0.7) -- (1.2,0.9);
\draw[xshift=\x cm,yshift=\y cm] (1,0.2) -- (1.2,0.9);
\draw[xshift=\x cm,yshift=\y cm,fill] (0,0) circle (0.02);
\draw[xshift=\x cm,yshift=\y cm,fill] (1,0.2) circle (0.02);
\draw[xshift=\x cm,yshift=\y cm,fill] (0.2,0.7) circle (0.02);
\draw[xshift=\x cm,yshift=\y cm,fill] (1.2,0.9) circle (0.02);
}

\end{tikzpicture}

\caption{A periodic packing that is not a lattice packing.}
\label{fig:periodic}
\end{figure}

Three last definitions: From a lattice $L$ we can construct its
\textit{dual lattice} by
\[
L^* = \{y \in \mathbb{R}^n : x \cdot y \in \mathbb{Z} \text{ for all }
x \in L\}.
\]
It is not difficult to see that the volume of a lattice and its dual
are reciprocal values, so that
$\vol(\R^n / L) \cdot \vol(\R^n / L^*) = 1$ holds. When a lattice
equals its dual ($L = L^*$), we call $L$ a \textit{unimodular}
lattice. In an \textit{even} lattice the square of every occurring
vector length is an even integer.

\subsection{The $\mathsf{E}_8$ Lattice}

The nicest lattices are those which are even and unimodular. However,
they only occur in higher dimensions: One can show that the first
appearance of such an even and unimodular lattice is only in dimension
$8$. It is the $\mathsf{E}_8$ lattice, which was first explicitly
constructed by the Russian mathematicians Korkine (\dates{1837}{1908)}
and Zolotareff (\dates{1847}{1878}) in 1873.

Here we give a construction of the $\mathsf{E}_8$ lattice which is
based on lifting binary error correcting codes. For this we define the
\textit{Hamming code} $\mathcal{H}_8$ via a regular three-dimensional
tetrahedron: consider the binary linear code $\mathcal{H}_8$ which is
the vector space over the finite field $\mathbb{F}_2$ (having elements
$0$ and $1$) spanned by the rows of the matrix
\[
G = (I \, | \, A) = 
\left(
\begin{array}{l|l}
1000 & 0111\\
0100 & 1011\\
0010 & 1101\\
0001 & 1110\\
\end{array}
\right)
\in \mathbb{F}_2^{4 \times 8},
\]
where $I$ is the identity matrix and where $A$ is the adjacency matrix
of the vertex-edge graph of a three-dimensional tetrahedron with
vertices $v_1, v_2, v_3, v_4$. Hence, the Hamming code is a
$4$-dimensional subspace of the vector space $\mathbb{F}_2^8$. It
consists of $2^4 = 16$ code words:
\[
\begin{array}{lllll}
0000|0000 & 1000|0111 & 1100|1100 & 0111|1000 & 1111|1111\\
                  & 0100|1011 & 1010|1010 & 1011|0100 &\\
                  & 0010|1101 & 1001|1001 & 1101|0010 &\\
                  & 0001|1110 & 0110|0110 & 1110|0001 &\\
                  &                   & 0101|0101 & & \\
                  &                   & 0011|0011 & &\\
\end{array}
\]
It is interesting to look at the occurring \textit{Hamming weights}
(the number of non-zero entries) of code words. In $\mathcal{H}_8$,
one code word has Hamming weight $0$, $14$ code words have Hamming
weight $4$, and one code word has Hamming weight $8$. Since all
occurring Hamming weights are divisible by four, and four is two times
two, the Hamming code $\mathcal{H}_8$ is called \textit{doubly even}.

Let us compute the \textit{dual code}
\[
\mathcal{H}_8^\perp = \left\{y \in \mathbb{F}_2^8 : \sum_{i=1}^8 x_i y_i =
0 \; (\text{mod } 2) \; \text{ for all } x \in \mathcal{H}_8\right\}.
\]
Squaring the matrix $A$ yields
\[
A^2_{ij} = \sum_{k = 1}^4 A_{ik} A_{kj} = |\{k : v_i \sim v_k \text{
  and } v_k \sim v_j\}| = 
\begin{cases}
3 & \text{if } i = j,\\
2 & \text{if } i \neq j.
\end{cases}
\]
Hence, $A^2 = I (\text{mod } 2))$. From this,
$GG^{\sf T} = I+ A^2 = 0 \text{ mod } 2$ follows. Hence, we have the
inclusion $\mathcal{H}_8 \subseteq \mathcal{H}_8^\perp$ and by
considering dimensions we see that $\mathcal{H}_8$ is a \textit{self-dual
code}; that is, $\mathcal{H}_8^\perp = \mathcal{H}_8$ holds.

We can define the lattice $\mathsf{E}_8$ by the following lifting
construction (which is usually called \textit{Construction A}):
\[
\mathsf{E}_8 = \left\{\frac{1}{\sqrt{2}} x : x \in \mathbb{Z}^8 ,\; x \text{ mod
    } 2 \in \mathcal{H}_8\right\}.
\]
Now it is immediate to see that $\mathsf{E}_8$ has $240$ shortest
(nonzero) vectors:
\[
\begin{array}{ll}
16 = 2^4 \text{ vectors: } & \pm \sqrt{2} e_i, \, i = 1, \ldots, 8 \\
224 = 2^4 \cdot 14 \text{ vectors: } & \frac{1}{\sqrt{2}} \sum_{i=1}^8 (\pm x_i)
                        e_i,  \; x \in \mathcal{H}_8 \text{ and }
                        \text{wt}(x) = 4,
\end{array}
\]
where $e_1, \ldots, e_8$ are the standard basis vectors of
$\mathbb{R}^8$ and where $\text{wt}(x) = |\{i : x_i \neq 0\}|$ denotes
the Hamming weight of $x$. The shortest nonzero vectors of
$\mathsf{E}_8$ have length $\sqrt{2}$. The occurring vector lengths in
$\mathsf{E}_8$ are $0, \sqrt{2}, \sqrt{4}, \sqrt{6}, \ldots$, so that
$\mathsf{E}_8$ is an even lattice.

From the lifting construction it follows that the density of
$\mathsf{E}_8$ is $|\mathcal{H}_4| = 16$ times the density of the
lattice $\sqrt{2}\Z^8$ which is spanned by
$\sqrt{2}e_1, \ldots, \sqrt{2}e_8$. Thus,
\[
\vol(\R^8 / \mathsf{E}_8) = \frac{1}{16} \cdot \vol(\R^8 / \sqrt{2}\Z^8) 
= \frac{1}{16} \cdot (\sqrt{2})^8 = 1,
\]
so that $\mathsf{E}_8$ is unimodular. One can show that $\mathsf{E}_8$
is the \textit{only} even unimodular lattice in dimension $8$. In
general, the lifting construction always yields an even unimodular
lattice when we start with a binary code which is doubly even and
self-dual.

Next to this exceptional number theoretical property, $\mathsf{E}_8$
has also exceptional geometric properties: In 1979, Odlyzko and
Sloane, and independently Levenshtein, proved that one cannot arrange
more vectors on a sphere in dimension $8$ of radius $\sqrt{2}$ so that
the distance between any two distinct vectors is also at least
$\sqrt{2}$; the $240$ vectors give the unique solution of the kissing
number problem in dimension $8$ as was shown in by Bannai and Sloane
in 1981.  Blichfeldt (\dates{1873}{1945}) showed in 1935 that
$\mathsf{E}_8$ gives the densest sphere packings among lattice
packings. For a long time it has been conjectured that $\mathsf{E}_8$
also gives the unique densest sphere packing in dimension $8$, without
imposing the (severe) restriction to lattice packings.  Now this
conjecture has been proved in the breakthrough work of Maryna
Viazovska.

\subsection{The Leech Lattice}

We turn to $24$ dimensions and to the Leech lattice. In 1965 Leech
(\dates{1926}{1992}) realized that he constructed a surprisingly dense
sphere packing in dimension $24$. For his construction, he used the
(extended binary) Golay code which is an exceptional error correcting
code found by Golay (\dates{1902}{1989}) in 1949. To define the Leech
lattice we modify the lifting construction of the $\mathsf{E}_8$
lattice. We replace the Hamming code by the Golay code and apply two
extra twists.

For defining the Golay code we replace the regular tetrahedron in the
construction of the Hamming code by the regular icosahedron and we
apply the first twist. Consider the binary code $\mathcal{G}_{24}$
spanned by the rows of the matrix
\[
G = (I \, | \, B) = 
\left(
\begin{array}{l|l}
100000000000 & 100000111111\\
010000000000 & 010110001111\\
001000000000 & 001011100111\\
000100000000 & 010101110011\\
000010000000 & 011010111001\\
000001000000 & 001101011101\\
000000100000 & 101110101100\\
000000010000 & 100111010110\\
000000001000 & 110011101010\\
000000000100 & 111001110100\\
000000000010 & 111100011010\\
000000000001 & 111111000001\\
\end{array}
\right)
\in \mathbb{F}_2^{12 \times 24},
\]
where we use $B = J - A$ (instead of $A$) and $J$ is the matrix with
only ones and where $A$ is the adjacency matrix of the vertex-edge
graph of a three-dimensional icosahedron with vertices
$v_1, \ldots, v_{12}$.  This code, the \textit{extended binary Golay
  code} $\mathcal{G}_{24}$, is a $12$-dimensional subspace in
$\mathbb{F}_2^{24}$. It contains one vector of Hamming weight $0$,
$759$ vectors of Hamming weight $8$, $2576$ vectors of Hamming weight
$16$, $759$ vectors of Hamming weight $20$, and one vector of Hamming
weight $24$; $\mathcal{G}_{24}$ is a doubly even and self-dual code.

We define the even unimodular lattice
\[
L_{24} = \left\{\frac{1}{\sqrt{2}} x : x \in \mathbb{Z}^{24}, \, x
  \text{ mod } 2 \in \mathcal{G}_{24}\right\}.
\] 
Since the the minimal non-zero Hamming weight occurring in the Golay
code is $8$, this lattice has $48$ shortest vectors
$\pm \sqrt{2} e_i$, with $i = 1, \ldots, 24$, of length $\sqrt{2}$. To
eliminate them we make the second twist, we define
\[
\begin{split}
\Lambda_{24} = & \left\{x \in L_{24} : \sqrt{2}\sum_{i=1}^{24} x_i = 0 \text{ mod 4}\right\} \\
& \qquad \cup \left\{(1, \ldots, 1) + x :
x \in L_{24}, \, \sqrt{2}\sum_{i=1}^{24} x_i = 2 \text{ mod 4}\right\}.
\end{split}
\]
\textit{This is the Leech lattice.} It is an even unimodular
lattice. In $\Lambda_{24}$ there are $196560$ shortest vectors which
have length $\sqrt{4}$. The occurring vector lengths in $\Lambda_{24}$
are $0, \sqrt{4}, \sqrt{6}, \sqrt{8}, \ldots$.

In 1969 Conway showed that the Leech lattice again has a
remarkable number theoretical property: It is the only even unimodular
lattice in dimension $24$ which does not have vectors of length
$\sqrt{2}$. He used this result to determine the automorphism group
(the group of orthogonal transformation which leave $\Lambda_{24}$
invariant) of the Leech lattice and it turned out the number of
automorphisms equals
\[
|\text{Aut}(\Lambda_{24})| = 2^{22} \cdot 3^9 \cdot 5^4 \cdot 7^2
\cdot 11 \cdot 13 \cdot 23 = 8315553613086720000,
\]
and that this group contained three new sporadic simple groups
$\mathrm{Co}_1, \mathrm{Co}_2. \mathrm{Co}_3$. The classification
theorem of finite simple groups, which was announced in 1980, says
that there are only $26$ finite simple sporadic groups. They are
sporadic in the sense that they are not contained in the infinite
families of cyclic groups of prime order, alternating groups and
groups of Lie type.

Similar to the eight-dimensional case, by results of Odlyzko, Sloane,
Levenshtein, Bannai and Sloane, the $196560$ shortest vectors of
$\Lambda_{24}$ give the unique solution of the kissing number problem
in dimension $24$. In 2004 Cohn and Kumar proved the optimality of the
sphere packing of the Leech lattice among lattice packings by a
computer assisted proof, see \cite{Cohn2009a} and
Section~\ref{sec:evidence}. However, despite all the similarities of
$\mathsf{E}_8$ and $\Lambda_{24}$, there is a puzzling difference
between $\mathsf{E}_8$ and $\Lambda_{24}$ when it comes to sphere
coverings: Sch\"urmann and Vallentin \cite{Schuermann} showed in 2006
that $\Lambda_{24}$ provides at least a locally thinnest sphere
covering in the space of $24$-dimensional lattices, whereas
Dutour-Sikiri\'c, Sch\"urmann, and Vallentin \cite{Dutour} showed in
2012 that one can improve the sphere covering of the $\mathsf{E}_8$
lattice when picking a generic direction in the space of
eight-dimensional lattices.

\subsection{Theta Series and Modular Forms}

As already indicated, the class of even unimodular lattices is
restrictive, at least in small dimensions.  One can show that they
only exist in dimensions that are divisible by $8$, furthermore for
every such dimension $n$ there are only finitely many even unimodular
lattices, this number is denoted by $h_n$. In the following table we
summarize the known values of $h_n$:
\[
\begin{array}{l|l}
h_8 = 1 & \text{Mordell, 1938}\\
h_{16} = 2 & \text{Witt, 1941}\\
h_{24} = 24 & \text{Niemeier, 1973}\\
 h_{32} \geq 1162109024 & \text{King, 2003}
\end{array}
\]

A major tool for studying even unimodular lattice are their theta
series (first studied by Jacobi (\dates{1804}{1851})): The
\textit{theta series} of a lattice $L$ is
\[
\vartheta_L = \sum_{r = 0}^\infty n_L(r) q^r \; \text{with} \; n_L(r)
= \{x \in L : x \cdot x = 2r\},
\]
the generating function of the number of lattice vectors of length
$\sqrt{2r}$. In order to work with them analytically we set
$q = e^{2\pi i z}$ where $z$ lies in the upper half
$\mathbb{H} = \{z \in \mathbb{C} : \text{Im } z > 0\}$ of the complex
plane, so that $\vartheta_L$ is a function of $z$. The theta function
is periodic mod $\mathbb{Z}$: we have
$\vartheta_L(z) = \vartheta_L(z + 1)$.  The \textit{Poisson summation
  formula} states
\[
\sum_{x \in L} f(x + v) = \frac{1}{\vol(\mathbb{R}^n / L)} \sum_{y \in
  L^*} e^{2\pi i x \cdot y} \widehat{f}(y), \text{ with } v \in \R^n,
\]
where
\[
\widehat{f}(y) = \int_{\mathbb{R}^n} f(x) e^{-2\pi i y \cdot x} dx
\]
is the $n$-dimensional \textit{Fourier transform}. Using the Poisson summation
formula one can show that $\vartheta_L$ satisfies the transformation
law
\[
\vartheta_L(-1/z) = (z/i)^{n/2} \frac{1}{\vol(\R^n/L)} \vartheta_{L^*}(z),
\]
which in particular shows that $\vartheta_L$ is a modular form of
weight $n/2$. From this it is not difficult to derive that an even unimodular
lattice can only exist when $n$ is a multiple of $8$.

\textit{What is a modular form?} The group
\[
\mathrm{SL}_2(\mathbb{Z}) = \left\{\begin{pmatrix} a & b\\c &
  d\end{pmatrix} : a,b,c,d \in \mathbb{Z}, ad-bc = 1\right\},
\]
which is generated by the matrices
\[
S = \begin{pmatrix} 0 & -1\\ 1 & 0 \end{pmatrix}
\; \text{ and } \;
T = \begin{pmatrix} 1 & 1\\ 0 & 1 \end{pmatrix}
\]
acts on upper half plane $\mathbb{H}$ by fractional linear
transformations
\[
\begin{pmatrix} a & b\\c & d\end{pmatrix} z = \frac{az + b}{cz + d}.
\]
The action of the generator $S$ corresponds to the involution
$z \mapsto -1/z$ and the action of the generator $T$ corresponds to
the translation $z \mapsto z + 1$.

\begin{figure}[htb]

\begin{tikzpicture}[scale=2.3]

\fill[color=black!15] (0,1) arc (90:120:1) -- (-0.5, 1.8) -- (0.5,1.8) -- (0.5,0.866) arc (60:90:1) -- cycle; 

\draw (-1.7,0) -- (1.7,0);
\draw (-1.5,0.866) -- (-1.5,1.8);
\draw (-1,1) arc(90:120:1);
\draw (-0.5,0.866) -- (-0.5,1.8);
\draw (0,1) arc(90:120:1);
\draw (0.5,0.866) -- (0.5,1.8);
\draw (1,1) arc(90:120:1);
\draw (1.5,0.866) -- (1.5,1.8);

\draw (-1.5,0.866) arc(60:0:1);
\draw (-1,1) arc(90:60:1);
\draw (-0.5,0.866) arc(120:180:1);
\draw (0,1) arc(90:60:1);
\draw (0.5,0.866) arc(120:180:1);
\draw (1,1) arc(90:60:1);
\draw (1.5,0.866) arc(120:180:1);

\draw (-0.5,0.577) -- (-0.5,0.287);
\draw (-0.5,0.287) arc(60:180:1/3);
\draw (-0.5,0.577) -- (-0.5,0.866);
\draw (-0.5,0.866) arc(60:0:1);
\draw (-0.5,0.287) arc(120:0:1/3);

\draw (0.5,0.577) -- (0.5,0.287);
\draw (0.5,0.287) arc(60:180:1/3);
\draw (0.5,0.577) -- (0.5,0.866);
\draw (0.5,0.866) arc(60:0:1);
\draw (0.5,0.287) arc(120:0:1/3);

\draw (-1,1.4) node {$T^{-1}$};
\draw (0,1.4) node {$I$};
\draw (1,1.4) node {$T$};
\draw (-1,0.75) node {$(ST)^2$};
\draw (0,0.75) node {$S$};
\draw (1,0.75) node {$TS$};
\draw (-0.70,0.46) node {$STS$};
\draw (-0.33,0.46) node {$ST$};
\draw (0.31,0.46) node {$ST^{-1}$};
\draw (0.68,0.46) node {$TST$};

\draw[fill] (0,1) circle (0.015);
\draw (0,0.99) node[black,above] {$i$};

\draw (-1.5,0.03) -- (-1.5,-0.03) node[below] {$-3/2$};
\draw (-1,0.03) -- (-1,-0.03) node[below] {$-1$};
\draw (-0.5,0.03) -- (-0.5,-0.03) node[below] {$-1/2$};
\draw (0,0.03) -- (0,-0.03) node[below] {$0$};
\draw (0.5,0.03) -- (0.5,-0.03) node[below] {$1/2$};
\draw (1,0.03) -- (1,-0.03) node[below] {$1$};
\draw (1.5,0.03) -- (1.5,-0.03) node[below] {$3/2$};

\end{tikzpicture}

\caption{A fundamental domain of the action of $\mathrm{SL}_2(\mathbb{Z})$ on the upper half plane $\mathbb{H}$.}
\end{figure}
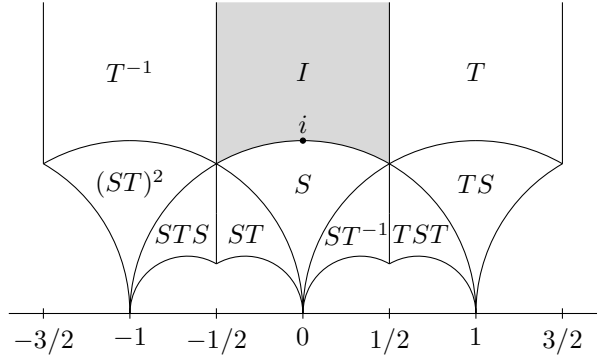

A \textit{modular form of weight $k$} is a holomorphic function $f \colon
\mathbb{H} \to \mathbb{C}$ that satisfies the transformation law
\[
f\left(\frac{az + b}{cz + d}\right) = (cz + d)^k f(z) \; \text{ for all }
\; \begin{pmatrix} a & b\\c & d\end{pmatrix} \in
\mathrm{SL}_2(\mathbb{Z}), \; z \in \mathbb{H},
\]
and which has a power series expansion in $q = e^{2\pi i z}$.

Next to theta series, \textit{Eisenstein series}, due to Eisenstein
(\dates{1823}{1852}), form an important class of modular forms. For an 
integer $k \geq 3$ define
\begin{equation}
\label{eq:Eisenstein}
E_k(z) = \frac{1}{2\zeta(k)} \sum_{(c,d) \in \mathbb{Z}^2 \setminus
  \{0\}} \frac{1}{(cz+d)^k},
\end{equation}
where $\zeta$ is the Riemann zeta function $\zeta(s) =
\sum_{n=1}^\infty \frac{1}{n^s}$. For even integers $k \geq 3$, the Eisenstein series $E_k$ is a modular form of weight $k$.

Curiously, a theorem of Siegel (\dates{1896}{1981}) gives a relation
between the theta series of even unimodular lattices and Eisenstein
series. Let $L_1, \ldots, L_{h_n}$ be the set of even unimodular
lattices in dimension $n$. Define
\[
M(n) = \frac{1}{|\Aut(L_1)|} + \cdots + \frac{1}{|\Aut(L_{h_n})|}.
\]
Then 
\[
E_{n/2}(z) = \frac{1}{M(n)} \sum_{j=1}^{h_n} \frac{1}{|\Aut(L_j)|} \vartheta_{L_j}(z)
\]

Another striking fact is that one can show that the modular forms form
an algebra which is isomorphic to the polynomial algebra
$\mathbb{C}[E_4,E_6]$.

When $k$ is even, the Eisenstein series has the Fourier expansion
\[
E_k(z) = 1 + \frac{2}{\zeta(1-k)} \sum_{n=1}^\infty \sigma_{k-1}(n)
e^{2\pi i n z},
\]
where $\sigma_{k-1}(n) = \sum_{d | n} d^{k-1}$ is the divisor
function. We can express the theta series of the $\mathsf{E}_8$
lattice and the Leech lattice $\Lambda_{24}$ by $E_4$ and $E_6$. For
the $\mathsf{E}_8$ lattice we have
\begin{align*}
\vartheta_{\mathsf{E}_8}(z) &= E_4(z) = 1 + 240 \sum_{r=1}^\infty \sigma_3(r) q^r \\
&= 1 + 240 \cdot 1 \cdot q + 240 \cdot 9 \cdot q^2 + 240 \cdot 28 \cdot q^3 + \cdots\\
&= 1 + 240 \cdot q + 2160 \cdot q^2 + 6720 \cdot q^3 + \cdots 
\end{align*}
For the Leech lattice we have
\[
\vartheta_{\Lambda_{24}}(z) = E^3_4(z) - 720 \Delta(z) = 1 + 196560
\cdot q^2 + 16773120 \cdot q^3 + \cdots 
\]
where $\Delta$ is defined as
\[
\Delta(z) = \frac{E_4(z)^3 - E_6(z)^2}{1728}.
\]

When $k = 2$, we can still write down the series as
in~\eqref{eq:Eisenstein}, but then we loose some pleasant
properties. For example the series no longer converges absolutely, so
the ordering of summating matters. Then
\[
E_2(z) = \frac{1}{2\zeta(2)} \sum_{c \in \mathbb{Z}} \sum_{d \in
  \mathbb{Z}}  \frac{1}{(c+dz)^2} = 
1 - 24 \sum_{n=1}^\infty \sigma_1(n) e^{2\pi i n z},
\]
where we of course omit the pair $(c,d) = (0,0)$ in the first sum.
This \textit{forbidden Eisenstein series} is not a modular form;
instead it satisfies the following transformation law
\[
E_2(-1/z) = z^2 E_2(z) - \frac{6iz}{\pi}.
\]
It is a \textit{quasi-modular form}.

\section{The Linear Programming Bound of Cohn and Elkies}
\label{sec:lpbound}

We can use optimization techniques, in particular linear and
semidefinite programming, to obtain upper bounds on the optimal sphere
packing density.

Let us recall some facts about linear programming. In a \emph{linear
  program} we want to maximize a linear functional over a
polyhedron. For example, we maximize the functional
$a \mapsto c \cdot a$ over all (entrywise) nonnegative vectors
$a \in \R^d$ satisfying the linear system $Aa = b$, or we maximize the
functional over all (nonnegative) vectors $a$ satisfying the
inequality $Aa \leq b$. Linear programs can be solved efficiently in
practice by a simplex algorithm (which runs over the vertices of the
polyhedron) or by Karmakar's interior point method, where the latter
runs in polynomial time.

The following theorem, the linear programming bound of Cohn and Elkies
from 2003, can be used to obtain upper bounds on the sphere packing
density. In the statement of this theorem we restrict to Schwartz
functions because the proof, which we give here because it is simple
and insightful, uses the Poisson summation formula. A function
$f \colon \R^n \to \R$ is a \emph{Schwartz function} if all its
partial derivatives exist and tend to zero faster than any inverse
power of $x$. There are alternative proofs that do not use Poisson
summation and for which the Schwartz condition can be weakened.

\begin{theorem} \label{thm:cohn-elkies} If $f \colon \R^n \to \R$ is a
  Schwartz function and $r_1$ is a positive scalar with
  $\smash{\widehat f}(0) = 1$, $\smash{\widehat f}(u) \geq 0$ for all
  $u$, and $f(x) \leq 0$ for $\|x\| \geq r_1$, then the density of a
  sphere packing in $\R^n$ is at most
  $f(0) \cdot \mathrm{vol}(B_n(r_1/2))$.
\end{theorem}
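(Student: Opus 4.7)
The plan is to reduce to periodic packings (since by the statement in Section~2 the optimal density can be approached arbitrarily well by periodic packings), and then to sandwich the double sum $\sum_{i,j=1}^m \sum_{v\in L} f(x_i-x_j+v)$ between an upper bound from the sign condition on $f$ and a lower bound from Poisson summation applied to $\widehat f$.

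So first I would fix a periodic packing with centers $\{x_1,\ldots,x_m\}+L$, where $L$ is a lattice in $\R^n$ and the pairwise distances between distinct centers are all at least $r_1$ (so the packing uses spheres of radius $r_1/2$). The density is $m\cdot\vol(B_n(r_1/2))/\vol(\R^n/L)$, so the goal becomes the inequality
\[
\vol(\R^n/L)\ \geq\ \frac{m}{f(0)}.
\]

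Next I would estimate $S := \sum_{i,j=1}^m \sum_{v\in L} f(x_i-x_j+v)$ from above. For each fixed pair $(i,j)$ and each $v\in L$, the vector $x_i-x_j+v$ is the difference of two distinct centers unless $i=j$ and $v=0$; in the former case $\|x_i-x_j+v\|\geq r_1$ and so $f(x_i-x_j+v)\leq 0$ by hypothesis. The only surviving contributions come from the $m$ diagonal terms with $v=0$, each equal to $f(0)$, giving $S\leq m\,f(0)$.

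For the lower bound I would apply the Poisson summation formula stated in Section~2.4 to the shifted sums $\sum_{v\in L}f(x_i-x_j+v)$, obtaining
\[
S\ =\ \frac{1}{\vol(\R^n/L)}\sum_{u\in L^*}\widehat f(u)\,\bigl|\textstyle\sum_{i=1}^m e^{2\pi i\,u\cdot x_i}\bigr|^2.
\]
Since $\widehat f(u)\geq 0$ everywhere, all terms on the right are nonnegative, so isolating $u=0$ (where $\widehat f(0)=1$ and the exponential sum has absolute value $m$) yields $S\geq m^2/\vol(\R^n/L)$. Combining with the upper bound gives the desired inequality on $\vol(\R^n/L)$, and hence the density bound.

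The only real subtlety is ensuring that Poisson summation and the interchange of summations are legitimate; this is exactly why the hypothesis that $f$ is Schwartz is imposed, since then both $f$ and $\widehat f$ are rapidly decreasing and every sum in sight converges absolutely. The reduction to periodic packings is standard and is precisely the kind of step the authors remark can be weakened in alternative proofs.
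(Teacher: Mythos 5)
Your proof is correct and follows exactly the same route as the paper: reduce to periodic packings, sandwich the double sum $\sum_{i,j}\sum_{v\in L} f(x_i-x_j+v)$ between the upper bound $m f(0)$ from the sign condition and the lower bound $m^2/\vol(\R^n/L)$ from Poisson summation and nonnegativity of $\widehat f$. The only cosmetic difference is that you write the exponential character sum explicitly as the square modulus $\bigl|\sum_i e^{2\pi i u\cdot x_i}\bigr|^2$, which the paper leaves in the equivalent form $\sum_{i,j} e^{2\pi i u\cdot(x_j-x_i)}$.
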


\begin{proof}
Let $P$ be a periodic packing of balls of radius $r_1/2$. This means there is a lattice $L$ and points $x_1,\ldots, x_m$ in $\R^n$ such that
\[
P = \bigcup_{v \in L} \bigcup_{i=1}^m (v + x_i + B_n(r_1/2)).
\]
The density of $P$ is $m \cdot \mathrm{vol}(B_n(r_1/2)) / \vol(\R^n / L)$. By Poisson summation we have
\[
\sum_{v \in L} \sum_{i,j=1}^m f(v + x_j - x_i) = \frac{1}{\vol(\R^n / L)} \sum_{u \in L^*} \widehat f(u) \sum_{i,j=1}^m e^{2\pi i u \cdot (x_j -x_i)},
\]
and because $\widehat f(0) = 1$ and $\widehat f(u) \geq 0$ for all $u$, this is at least $m^2 / \vol(\R^n / L)$.
On the other hand, by the condition $f(x) \leq 0$ for $\|x\| \geq r_1$, we have
\[
\sum_{v \in L} \sum_{i,j=1}^m f(v + x_j - x_i) \leq m f(0).
\]
Hence, the density of $P$ is at most $f(0) \cdot \mathrm{vol}(B_n(r_1/2))$. The density of any packing can be approximated arbitrarily well by the density of a periodic packing, so this completes the proof.
\end{proof}

We can additionally require either $f(0)= 1$ or $r_1 = 1$, without
weakening the theorem. Moreover, we can restrict to \textit{radial
  function}s, for if a function $f$ satisfies the conditions of the
theorem for some $r_1$, then the function
\[
x \mapsto \smash{\int_{S^{n-1}}} f(\|x\| \xi) \, d\omega(\xi),
\]
where $\omega$ is the normalized invariant measure on $S^{n-1}$, also
satisfies these conditions. For $f \colon \R^n \to \R$ radial, we
(ab)use the notation $f(r)$ for the common value of $f$ on the vectors
of length $r$. The (inverse) Fourier transform maps radial functions
to radial functions. Moreover, the \text{Gaussian}
$x \mapsto \smash{e^{-\pi \|x\|^2}}$ is a fixed point of the Fourier
transform, and, more generally, the sets
\[
P_d = \Big\{x \mapsto p(\|x\|^2) e^{-\pi \|x\|^2} : p \text{ is a polynomial of degree at most $d$} \Big\},
\]
are invariant under the Fourier transform. A
\textit{computer-assisted} approach to find good functions for the
above theorem is to restrict to functions from $P_d$ for some fixed
value of $d$. Any function from this set that satisfies $f(0)=1$ can
be written as
\begin{equation}
\label{eq:fa}
f_a(x) = \Big(1 + \sum_{k=1}^d a_kk! \pi^{-k}L_k^{n/2-1}(\pi  \|x\|^2)\Big) e^{-\pi \|x\|^2}
\end{equation}
for some $a \in \R^d$, where $L_k^{n/2-1}$ is the Laguerre polynomial
of degree $k$ with parameter $n/2-1$ (\text{Laguerre polynomials} are
a family of orthogonal polynomials). We choose this form for $f_a$, so
that its Fourier transform is
\begin{equation}
\label{eq:hatfa}
\widehat f_a(u) = \Big(1 + \sum_{k=1}^d a_k \|u\|^{2k}\Big) e^{-\pi \|u\|^2},
\end{equation}
which means that $a \geq 0$ immediately implies
$\widehat f_a(u) \geq 0$ for all $u$. Setting $r_1=1$ in the above
theorem, we see that the optimal sphere packing density is upper
bounded by the maximum of the linear functional $a \mapsto f_a(0)$
over all nonnegative vectors $a \in \R^d$ for which the linear
inequalities $f_a(r) \leq 0$ for $r > 1$ are satisfied. For every
fixed value of $d$ this gives a semi-infinite linear program, which is
a linear program with finitely many variables and infinitely many
linear constraints.

One approach to solving these semi-infinite programs is to select a
finite sample $S \subseteq [1, \infty)$ and only enforce the
constraints $f_a(r) \leq 0$ for $r \in S$. For each $S$ this yields a
linear program whose optimal solution $a$ can be computed using a
linear programming solver. Then we verify that $f_a(r) \leq 0$ for all
$r > 1$, or if this is not (almost) true, we run the problem again
with a different (typically bigger) sample $S$. This approach works
well in practice for the \emph{spherical code problem}, which is a
compact analogue of the sphere packing problem. Here, given a scalar
$t \in (-1,1)$, we seek a largest subset of the sphere
$S^{n-1} \subseteq \R^n$ such that the inner product between any two
distinct points is at least $t$. A spherical code corresponds to a
spherical cap packing where we center spherical caps of angle
$\arccos(t/2)$ about the points in the code. The Cohn-Elkies bound can
be seen as a noncompact analogue of a similar bound for the spherical
code problem known as the \textit{Delsarte linear programming
  bound}. However, because of noncompactness this sampling approach
does not work well for the sphere packing problem.

Another approach, based on semidefinite programming, does work well
for noncompact problems such as the sphere packing problem. In
\cite{LaatOliveiraVallentin2014} this approach is used to compute
upper bounds for packings of spheres and spherical caps of several
radii. \emph{Semidefinite programming} is a powerful generalization of
linear programming, where we maximize a linear functional over a
spectrahedron instead of a polyhedron. That is, we maximize a
functional $X \mapsto \langle X, C \rangle$ over all positive
semidefinite $n \times n$ matrices $X$ that satisfy the linear
constraints $\langle X, A_i \rangle = b_i$ for $i = 1, \ldots,
m$.
Here $\langle A, B \rangle = \mathrm{trace}(B^{\sf T} A)$ denotes the
trace inner product. As for linear programs, semidefinite programs can
be solved efficiently by using interior point methods. The usefulness
of semidefinite programming in solving the above semi-infinite linear
programs stems from the following two observations: Firstly, P\'olya
and Szeg\"o showed that a polynomial is nonnegative on the interval
$[1, \infty)$ if and only if it can be written as
$s_1(r) + (r-1) s_2(r)$, where $s_1$ and $s_2$ are sum of squares
polynomials. Secondly, a sum of squares polynomial of degree $2d$ can
be written as $b(r)^{\sf T} Q b(r)$, where
$b(r) = (1, r, \ldots, r^d)$, and where $Q$ is a positive semidefinite
matrix. (To see that a polynomial of this form is a sum of squares
polynomial one can use a Cholesky factorization $Q = R^{\sf T} R$.)
Using these observations we can introduce two positive semidefinite
matrix variables $Q_1$ and $Q_2$, and replace the infinite set of
linear constraints $f_a(r) \leq 0$ for $r > 1$, by a set of $2d+2$
linear constraints that enforce the identity
\[
f_a(r) = (1-r) b(r)^{\sf T} Q_2 b(r) - b(r)^{\sf T} Q_1 b(r).
\]
In this way we obtain a semidefinite program, which can be solved with
a semidefinite programming solver, and whose optimal value upper
bounds the sphere packing density. For each $d$ this finds the optimal
function $f_a$

\section{Producing Numerical Evidence}
\label{sec:evidence}

Now we want to understand what must happen when the Cohn-Elkies bound
can be used to \textit{prove the optimality} of the sphere packing
given by a unimodular lattice $L$.

Then there exists a \textit{magic function} $f \colon \R^n \to \R$ and
a scalar $r_1$ that satisfy the conditions of
Theorem~\ref{thm:cohn-elkies}, such that the density of the sphere
packing equals $f(0) \cdot \mathrm{vol}(B_n(r_1/2))$. As observed
before, we may assume $f(0) = 1$, so that $r_1$ is the shortest
nonzero vector length in $L$. Under these assumptions on $f$ we can
derive extra properties that the function $f$ and its Fourier
transform must satisfy. Since $f(x) \leq 0$ and
$\smash{\widehat f}(x) \geq 0$ for all $\|x\| \geq r_1$, and
$f(0) = \smash{\widehat f}(0) = 1$, we have \textit{equality}
\[
1 \leq \sum_{x \in L} \widehat f(x) = \sum_{x \in L} f(x) \leq 1.
\]
This says that we have to have $f(x) = \smash{\widehat f}(x) = 0$ for
all $x \in L \setminus \{0\}$. In fact, we can apply this argument to
any rotation of $L$, so that $f(x) = \smash{\widehat f}(x) = 0$ for
all $x$ where $\|x\|$ is a nonzero vector length in $L$. As noted
before, we may take $f$ to be radial, and then we have (again abusing
notation)
\[
f(r) = \widehat f(r) = 0  \; \text{for all nonzero vector lengths $r$ in $L$.}
\]
This also tells us something about the \textit{orders of the roots}.
We have $f(0) = 1$ and $f(r) \leq 0$ for $r \in [r_1,\infty)$, so the
roots at the vector lengths that are strictly larger than $r_1$ must
have even order. We have $\smash{\widehat f}(0) = 1$ and
$\smash{\widehat f}$ is nonnegative on $[0,\infty)$, so the roots at
the nonzero vector lengths must have even order.

If $f$ does not have additional roots, then in \cite{CohnElkies2003}
it is shown that there is no other periodic packing achieving the same
density as $L$. To apply this it is important that $\mathsf E_8$ is
the only even unimodular lattice in $\R^8$ and $\Lambda_{24}$ is the
only even unimodular lattice in $\R^{24}$ that does not contain
vectors of length $\sqrt{2}$.

\subsection{The Cohn-Elkies paper}

In \cite{CohnElkies2003} Cohn and Elkies used this insight about the
potential locations of the roots and double roots to derive a
numerical scheme to find functions that are \textit{close} to magic
functions. They parametrized the function $f_a$ as in
\eqref{eq:fa}. Then they required that $f_a$ and $\widehat f_a$ have
as many roots and double roots as possible, depending on the degree
$d$. Afterwards they applied Newton's method to perturb the roots and
double roots in order to optimize the value of the bound. In dimension
$8$ and $24$ they obtained bounds which were too high only by factors
of $1.000001$ and $1.0007071$. This provided the first strong evidence
that magic functions exist for these two dimensions. Since the magic
functions $f$ have to have infinitely many roots, the degree $d$ has
to go to infinity. Could this method, in the limit, actually give the
exact sphere packing upper bounds?

\subsection{The Cohn-Kumar paper}

The next step was taken by Cohn and Kumar in \cite{Cohn2009a}. They
improved the numerical scheme and by using degree $d = 803$ (with
$3000$-digit coefficients) they showed that in dimension $24$ there is
no sphere packing which is $1 + 1.65 \cdot 10^{-30}$ times denser than
the Leech lattice. The actual aim of their paper was to show that the
Leech lattice is the unique densest lattice in its dimension. For this
they used the numerical data together with the known fact that the
Leech lattice is a strict local optimum. The proof of Cohn and Kumar
is a beautiful example of the symbiotic relationship between human and
machine reasoning in mathematics.

\subsection{The Cohn-Miller paper}

For a long time Cohn and Miller were fascinated by the properties of
these conjecturally existing magic functions. In their paper
\cite{Cohn2016a}, submitted March 15, 2016 to the arXiv-preprint
server, they gave a ``construction'' of the magic functions using
determinants of Laguerre polynomials. However, they could not prove
that this construction indeed worked. With the use of high precision
numerics they experimented with their construction. This resulted in
improved bounds, optimality of $\Lambda_{24}$ within a factor of
$1 + 10^{-51}$. Even more importantly, they detected some unexpected
rationalities: For instance using their numerical data they
conjectured that for $n = 8$, the magic function $f$ and
$\smash{\widehat f}$ have quadratic Taylor coefficients $-27/10$ and
$-3/2$, respectively. For $n = 24$, the corresponding coefficients
should be $-14347/5460$ and $-205/156$.

\section{Viazovska's Breakthrough}
\label{sec:breakthrough}

Viazovska made the spectacular discovery that a magic function indeed
exists for dimension $n = 8$.  Building on this, Cohn, Kumar, Miller,
Radchenko, and Viazovska found a magic function for $n =
24$.
Viazovska's construction is based on a couple of new ideas, which we
want to explain briefly.

Each radial Schwartz function $f \colon \R^n \to \R$ can be written as
a linear combination of radial eigenfunctions of the Fourier transform
in $\R^n$ with eigenvalues $+1$ and $-1$.  Viazovska wrote the magic
function as a linear combination $f = \alpha f_+ + \beta f_-$, where
$f_+$ is a radial eigenfunction of the Fourier transform with
eigenvalue $+1$ and $f_-$ is a radial eigenfunction with eigenvalue
$-1$.  The coefficients $\alpha$ and $\beta$ are determined later on.

She makes the Ansatz that for $r > r_1$, we can write these functions
$f_+$ and $f_-$ as a squared sine function times the Laplace transform
of a (quasi)-modular form. That is, she proposes that
\[
f_+(r) = -4 \sin(\pi r^2/2)^2 \int_0^{i \infty} \psi_+ (-1/z) z^{n/2-2} e^{\pi i r^2 z} \, dz
\]
and
\[
f_-(r) = -4 \sin(\pi r^2/2)^2 \int_0^{i\infty} \psi_-(z) e^{\pi i r^2 z} \, dz,
\]
where $\psi_+$ is a quasi-modular form and $\psi_-$ is a modular
form.

The $\sin(\pi r^2/2)^2$ factor insures (assuming the above
integrals do not have cusps) that the resulting function $f$ (as well
as its Fourier transform) have double roots at all but the first
occurring vector lengths.

Viazovska noticed that the function $f_-$ is an eigenfunction of the
Fourier transform having eigenvalue $-1$ when the following modularity
relation holds:
\[
\psi_-\left(\frac{az+b}{cz+d}\right) = (cz+d)^{2-n/2} \psi_-(z)
\text{ for all } \begin{pmatrix} a & b \\ c & d\end{pmatrix} 
\in \mathrm{SL}_2(\mathbb{Z}), a,d \text{ odd, } b,c \text{ even.}
\]
For the explicit definition of $f_-$ we need the theta functions
\[
\Theta_{01} = \sum_{n \in \Z} (-1)^n e^{\pi i n^2 z} \quad \text{and} \quad
\Theta_{10} = \sum_{n \in \Z} e^{\pi i (n + 1/2)^2 z}.
\]
Similarly, but technically much more involved, the function $f_+$ is an
eigenfunction for the eigenvalue $+1$ when $\psi_+$ is a quasi-modular
form. The forbidden Eisenstein series $E_2$ becomes important here.

Now it needs to be shown that $f_+$ and $f_-$ have analytic
extensions, that there exists a linear combination
$f = \alpha f_+ + \beta f_-$ so that $\smash{\widehat f}(0) = f(0) = 1$ holds, and
that the sign conditions $f(r) \leq 0$ for $r \geq r_1$ and
$\smash{\widehat f}(u) \geq 0$ for all $u \geq 0$ are fulfilled.  For this, and
more, we refer to the beautiful original papers. Here we want to end
with taking a look at the magic functions: See Table~\ref{tab:magic}.

\begin{table}[htb]
\centering
{\scriptsize
\begin{tabular}{ll}
\toprule
$\mathsf E_8$ & $\Lambda_{24}$\\
\midrule
$\psi_+ = \dfrac{(E_2E_4 - E_6)^2}{\Delta}$ & $\psi_+ = \dfrac{25 E_4^4 - 49 E_6^2E_4 + 48 E_6E_4^2E_2 + 25E_6^2E_2^2-49E_4^3E_2^2}{\Delta^2}$\\[1.3em]
$\psi_- = \dfrac{5\Theta_{01}^{12} \Theta_{10}^8 + 5 \Theta_{01}^{16}\Theta^4_{10} + 2 \Theta_{01}^{20}}{\Delta}$ & $\psi_- = \dfrac{7\Theta_{01}^{20}\Theta_{10}^8 + 7 \Theta_{01}^{24}\Theta_{10}^4 + 2 \Theta_{01}^{28}}{\Delta^2}$\\[1.2em]
$f(x) = \dfrac{\pi i}{8640} f_+(x) + \dfrac{i}{240\pi} f_-(x)$ & $f(x) = -\dfrac{\pi i}{113218560} f_+(x) - \dfrac{i}{262080\pi} f_-(x)$\\
\bottomrule
\end{tabular}
}
\smallskip
\smallskip
\caption{The magic functions.}
\label{tab:magic}
\end{table}

\begin{appendix}
\section{Interview with Henry Cohn, Abhinav Kumar, Stephen D. Miller,
  and Maryna Viazovska\footnote{We conducted the interview using the online communication platform google hangouts between 20 May, 2016 and 14 June, 2016.}}

\section*{Computer Assistance}

\begin{description}

\item[\sc Frank] 

Dear Henry, Abhinav, Steve, and Maryna, First of all let me
congratulate you to your breakthrough papers. This issue of the
``Nieuw Archief voor Wiskunde'' is a special issue focussing on
computer-assisted mathematics. In it we have two articles about sphere
packings: One about the formal proof of the Kepler conjecture and one
about your recent breakthrough on sphere packings in dimensions $8$
and $24$. At the moment a proof of the Kepler conjecture without
computer-assistance is not in sight, but your proofs in dimension $8$
and $24$ require almost no computers. How were computers helpful to
you when finding the proofs?

\item[\sc Abhinav]

The Cohn-Elkies paper and later the Cohn-Kumar and the Cohn-Miller
papers were certainly useful as indicators that the solution was out
there waiting for the right functions. In our new
Cohn-Kumar-Miller-Radchenko-Viazovska paper at least, the numerical
data was quite useful because once we had figured out the right finite
dimensional space of modular or quasi-modular forms, the numerics
helped us pin down the exact form (up to scaling) of the function. In
particular, we matched values and derivatives of $f$ and $\widehat{f}$
at lattice vector lengths to cut down the space by imposing linear
conditions. Steve has a \texttt{Mathematica} code to do some of this
but we also used \texttt{PARI/GP} and occasionally \texttt{Maple}.

A couple more things---in both the proofs the final inequalities
needed for the functions and Fourier transforms are done with computer
assistance. There might be more elegant hand proofs, but so far we
haven't found them. And we did quite a lot of messing around with
$q$-expansions etc., which would have been very painful outside of a
computer algebra system

\item[\sc Steve]

Though I think there will eventually be a slick proof that can be
written by hand, computers were completely essential in this story.
To me the best example is the appearance of rational numbers that
Henry Cohn and I discovered.

I can only speak for myself, but I was completely fascinated by the
(then proposed) existence of these ``magic'' functions which
completely solve sphere packing in special dimensions.  To satisfy
this curiosity, Henry and I began computing their features to see if
we could learn more about them.  We found---using some serendipity
with the online ``inverse symbolic calculator'' website---that their
quadratic Taylor coefficients were rational, and furthermore related
to Bernoulli numbers.  This was a strong hint that modular forms were
connected, though we never understood why until we saw Maryna's paper.
We found other rationalities (such as the derivatives at certain
points) using some theoretical motivation, combined with good
numerical approximations.  It was a type of ``moonshine'', with a
fascinating sequence of numbers and an amazing structure we could not
otherwise access.

Once Maryna's paper appeared, it took just a few days to combine her
insight with our previous numerics in an exact way.  It's important to
stress that at this point we could derive the magic function for $24$
dimensions without using floating point calculations, since we had
already extrapolated exact expressions from previous numerics.  Maybe
we will later understand a way to derive the $24$-dimensional
functions without such information, but at the time it was highly
convenient to leverage them.

\item[\sc Frank]

Maryna, did you also use computer assistance when you found the
function for dimension $8$? In your paper you mention in passing that
one compute the first hundred terms of Fourier expansions of modular
forms in a few second using \texttt{PARI/GP} or \texttt{Mathematica}.

\item[\sc Maryna]

Numerical evidence was crucial to believe in the existence and
uniqueness of ``magic'' functions. I used computer calculations to
verify that approximations to the magic function computed from linear
equations (similar to the equations considered in Cohn-Miller paper)
converge to the function computed as an integral transform of a
modular form. I used \texttt{Mathematica} and \texttt{PARI/GP} for
this purpose.

\item[\sc Frank]

Checking the positivity conditions is the only part of the proof which
depends on the use of computers. How do you make sure that your
computer proof is indeed mathematically rigorous?

\item[\sc Henry]

In her $8$-dimensional proof, Maryna used interval arithmetic. In the
$24$-dimensional case, we used exact rational arithmetic. Either way,
it's not a big obstacle. The inequalities you need have a little
slack, which means you can bound everything in any number of different
ways, without needing to do anything too delicate.

\item[\sc Steve]

To elaborate: Our positivity check involves showing certain power
series $f(q)$ in a parameter $q$ are positive, where $q < 1$.  It is
not difficult to bound the coefficients and deduce this positivity for
$q < c$ (where $c$ is an effective constant), so the problem reduces
to showing positivity for $q$ in the interval $[c,1]$.  Numerically
one can plot this directly, of course.  From such a graph we see
$f(q)>b$ for some explicit constant $b$.  Write $f(q)=p(q)+t(q)$,
where $p(q)$ is a polynomial consisting of the first several terms in
the power series and $t(q)$ the tail, with the number of terms chosen
so that $t(q)$ is provably less than $b/2$ for $q$ in $[c,1]$.  We are
now reduced to showing $p(q)-b/2 > 0$ for $q \in [c,1]$, and such an
inequality can be rigorously established using Sturm's theorem.

\end{description}

\section*{Modular forms}

\begin{description}

\item[\sc Frank]

For a long time it has been known that the $\mathsf{E}_8$ lattice and
the Leech lattice have strong connections to modular forms, simply by
their theta series. However, one thing which puzzles us (we mainly
work in optimization) that you solve an optimization problem, an
infinite-dimensional linear program, with tools from analytic number
theory, especially using modular forms. How surprising was it to you
that using modular forms was one key to the proof?

\item[\sc Henry]

Modular forms are by far the most important class of special functions
related to lattices, so in that sense it's not so surprising that they
come up.  Over the years many people had suggested using them, but it
wasn't clear how.  For example, many years ago I had tried the Laplace
transform of a modular form, but without Maryna's $\sin^2$-factor.
Without that, it seemed impossible to get anything like the right
roots, and therefore the approach was completely useless.  What I find
beautiful about her proof is how ingeniously it puts everything
together (when I know from personal experience that thinking ``I'd
better use modular forms'' will not just lead you to this proof).

Before Maryna's proof, I could imagine two possibilities.  One was
that the right approach would be to solve the LP problem in general,
getting the exceptional dimensions just as special cases.  This
approach might not have involved modular forms at all.  The other was
that there would be particular special functions in those dimensions.
I always hoped for something special in $8$ and $24$ dimensions (e.g.,
based on the numerical experiments Steve and I worked on), but I was a
little worried that maybe the difficulty of writing these functions
down indicated that this might be the wrong approach (while solving
the problem in general seemed even harder).  It was great to see that
everything was as beautiful as we had always hoped.

\item[\sc Steve]

The use of Poisson summation in the Cohn-Elkies paper (and an earlier
technique of Siegel) had already brought methods of analytic number
theory into the subject.  It was clear relatively early on that
modular forms must be somehow involved in the final answer.  This is
both because of the appearance of special Bernoulli numbers (that
prominently arise in modular forms) as well as the overall structure
of the summation formulas.  For example, Henry and I derived a
relevant Fourier eigenfunction with simple zeros using Voronoi
summation formulas and derivatives of modular forms.

However, while these ingredients had been on the table for a long
time, we didn't know how to combine them until Maryna's paper
appeared.  At that point many of the various pieces of evidence we had
suddenly fit together.  Quasi-modular forms (which are derivatives of
modular forms) are very crucial to this story.

\end{description}

\section*{Collaboration}

\begin{description}

\item[\sc Frank]

Maryna's breakthrough paper which solved the sphere packing problem in
dimension $8$ was submitted on March 14, 2016 to the arXiv-preprint
server. Then it took only one week until you submitted the solution of
the sphere packing problem in dimension $24$ to the arXiv. Working at
five different, distant places, how did you collaborate?  What were
the difficulties when going from $8$ to $24$ dimensions?

\item[\sc Steve]

That was certainly an exciting and memorable week.  Once we had
assembled our team, things moved extremely quickly.  This is mainly
because Maryna's methods are so powerful, but it was also important
that certain pairs of us (Henry-myself, Abhinav-Henry, and
Danylo-Maryna) were established collaborators that had already worked
together well.

In addition to phone calls and email, we used \texttt{Skype} and
\texttt{Dropbox} to communicate our ideas.  I particularly like
drawing mathematics on a tablet PC and sharing the screen on
\texttt{Skype}---this allows the others to watch as if I'm writing on
a blackboard.  As soon as we saw Maryna's paper on Tuesday morning, we
tried to make concrete bridges with the numerical observations Henry
and I made in our Cohn-Miller paper.  After some reformulation of her
modular forms as quotients, by Wednesday it was then clear what
properties of the $q$-expansions would be needed to obtain the
$24$-dimensional functions.  We also set up computer programs to match
potential candidate functions with the numerical values that we could
compute separately.

On Thursday we had the right space of modular forms and a program
ready to search in them.  By matching the conjectured rationalities,
we found the even eigenfunction on Thursday night and the odd
eigenfunction on Friday morning.  We used \texttt{Mathematica} and
\texttt{PARI/GP} for this.  We also checked using graphs and
$q$-expansions that the necessary positivity conditions indeed hold,
but did not have a completely rigorous proof of this remaining point.

By Friday afternoon I was completely exhausted (and in any event do
not work on the Jewish Sabbath).  It's important to note that the
positivity analysis was a little different in $24$ dimensions than in
$8$ because of an extra pole that occurs.
\end{description}

\section*{Going further}

\begin{description}

\item[\sc Frank]

Now the sphere packing problem has been solved in $1$, $2$, $3$, $8$
and $24$ dimensions and, coming close to the end of the interview, it
is time to make speculations: Are there candidate dimensions where a
solution is in sight? Do you think that your method will be useful for
this (or for other problems)?

\item[\sc Henry]

It's hard to say for sure whether there might be further sharp cases
in higher dimensions, but it seems unlikely that they would have
remained undetected. (At the very least it's not plausible that they
could have the same widespread occurrences in mathematics as
$\mathsf{E}_8$ or the Leech lattice, since they would presumably have
been discovered in the process of classifying the finite simple
groups, for example.) Two dimensions, $n = 2$, remains open, although
of course a solution is known by elementary geometry; the LP bounds
behave rather differently in two dimensions compared with $8$ or $24$.

\item[\sc Steve]

Yes, dimension two seems very different because of the delicate
arithmetic nature of the root lengths.

\item[\sc Henry]
 
I'm sure Maryna's wonderful approach to constructing these functions
is just the tip of an iceberg, and I'm optimistic that humanity will
learn more about how LP bounds and related topics work.

One mystery I find particularly intriguing is what's so special about
$8$ and $24$ dimensions. Maryna's methods give a beautiful proof, but
I still don't really know a conceptual explanation as to what's
different in, say, $16$ dimensions (beyond just the fact that the
Barnes-Wall lattice isn't as nice as $\mathsf{E}_8$ or the Leech
lattice).  On a slightly different topic, I hope someone solves the
four-dimensional sphere packing problem, but it will require different
techniques. Unlike the cases where the LP bounds are sharp, these pair
correlation inequalities will not suffice by themselves. But the
$\mathsf{D}_4$ lattice is awfully beautiful, and the world deserves a
proof of optimality. The only question is how\dots

\item[\sc Frank]

Henry, Abhinav, Steve, and Maryna, thank you very much for this
interview.

\end{description}

\end{appendix}


\begin{thebibliography}{[99]}

\bibitem{CohnElkies2003}
H.~Cohn, N.D.~Elkies, 
{\em New upper bounds on sphere packings I}, 
Ann. of Math. (2) {\bf 157} (2003), 689--714. 
(\url{http://arxiv.org/abs/math/0110009})

\bibitem{Cohn2009a}
H.~Cohn and A.~Kumar, 
Optimality and uniqueness of the Leech lattice among lattices,
\textit{Ann.\ of Math.}~170 (2009) 1003--1050.
(\url{http://arxiv.org/abs/math/0403263})

\bibitem{Cohn2016b}
H. Cohn, A. Kumar, S.D. Miller, D. Radchenko, M.S.~Viazovska,
{\em The sphere packing problem in dimension $24$},
arXiv:1603.06518 [math.NT], 12 pages.
(\url{http://arxiv.org/abs/1603.06518})

\bibitem{Cohn2016a}
H. Cohn, S.D. Miller,
{\em Some properties of optimal functions for sphere packing in
  dimensions $8$ and $24$},
arXiv:1603.04759 [math.MG], 23 pages.
(\url{http://arxiv.org/abs/1603.04759})

\bibitem{Conway1988a} 
J.H. Conway and N.J.A. Sloane, {\em Sphere
    Packings, Lattices and Groups}, Springer-Verlag, New York, 1988.

\bibitem{Dutour}
M. Dutour Sikiri\'c, A. Sch\"urmann, F. Vallentin,
{\em Inhomogeneous extreme forms},
Annales de l'institut Fourier \textbf{62} (2012), 2227--2255.
(\url{http://arxiv.org/abs/1008.4751})

\bibitem{HalesEtc}
T.C. Hales, M. Adams, G. Bauer, D. Tat Dang, J. Harrison, T. Le Hoang,
C. Kaliszyk, V. Magron, S. McLaughlin, T. Tat Nguyen, T. Quang Nguyen,
T. Nipkow, S. Obua, J. Pleso, J. Rute, A. Solovyev, A. Hoai Thi Ta,
T. Nam Tran, D. Thi Trieu, J. Urban, K. Khac Vu, and R. Zumkeller,
{\em A formal proof of the Kepler conjecture},
arXiv:1501.02155 [math.MG], 21 pages.
(\url{http://arxiv.org/abs/1501.02155})

\bibitem{LaatOliveiraVallentin2014}
D. de Laat, F.M. de Oliveira Filho, F. Vallentin,
{\em Upper bounds for packings of spheres of several radii}, 
Forum Math. Sigma 2 (2014), e23, 42 pages. 
(\url{http://arxiv.org/abs/1206.2608})

\bibitem{Schuermann}
A. Sch\"urmann, F. Vallentin,
{\em Local covering optimality of lattices: Leech lattice versus root
  lattice E8},
IMRN \textbf{32} (2005), 1937--1955.
(\url{http://arxiv.org/abs/math/0405441})

\bibitem{Viazovska2016a}
M.S.~Viazovska, 
{\em The sphere packing problem in dimension $8$},
arXiv:1603.04246 [math.NT], 22 pages.
(\url{http://arxiv.org/abs/1603.04246})

\end{thebibliography}
\end{document}